\documentclass[11pt]{amsart}

\usepackage{latexsym}
\usepackage{amsmath}
\usepackage{amssymb}
\usepackage{tikz}
\usepackage{subcaption}
\usepackage{todonotes}
\usetikzlibrary{positioning, shapes.geometric}

\newtheorem{theorem}{Theorem}[section]
\newtheorem{lemma}[theorem]{Lemma}

\newtheorem{proposition}[theorem]{Proposition}
\newtheorem{sublemma}{}[theorem]

\newcommand{\ba}{\backslash}

\newcommand{\cC}{{\mathcal C}}
\newcommand{\cM}{{\mathcal M}}
\newcommand{\cN}{{\mathcal N}}

\newcommand{\cS}{{\mathcal S}}
\newcommand{\cT}{{\mathcal T}}

\usepackage{enumerate}

\usepackage{xfrac}

\begin{document}

\title[Maximum Number of Trees Displayed by a Tree-Child Network]{Note on the Maximum Number of Trees Displayed by a Tree-Child Network}

\author{Yukihiro Murakami}
\address{Delft Institute of Applied Mathematics, Delft University of Technology, Mekelweg 4, 2628 CD, Delft, The Netherlands}
\email{y.murakami@tudelft.nl}

\author{Charles Semple}
\address{School of Mathematics and Statistics, University of Canterbury, Christchurch, New Zealand}
\email{charles.semple@canterbury.ac.nz}

\thanks{The second author was supported by the New Zealand Marsden Fund.}

\keywords{Phylogenetic networks, tree-child networks, displayed trees}

\date{\today}

\begin{abstract}
In this note, we show that, for all $n\ge 2$, the number of distinct rooted binary phylogenetic $X$-trees displayed by a binary tree-child network $\cN$ on $X$ with $n$ leaves is at most $2^{n-1}-1$ and that this upper bound is sharp. Furthermore, if $\cN$ displays exactly $2^{n-1}-1$ such trees, then exactly one rooted binary phylogenetic $X$-tree is displayed twice, and this tree can be canonically found by iteratively replacing a reticulated cherry with a cherry.
\end{abstract}

\maketitle

\section{Introduction}

Phylogenetic trees are used to represent the evolutionary history of a collection of present-day species. However, because of reticulate (non-treelike) processes such as lateral gene transfer and hybridisation, phylogenetic networks are often a more faithful representation~\cite{doo99}. Amongst the wild assortment of introduced classes of phylogenetic networks, normal networks~\cite{wil08, wil10} and tree-child networks~\cite{car09} are arguably the most prominent and possibly the most important~\cite{fra25}. (Normal networks are tree-child networks with no ``short cuts''.) Both of these classes are general enough to capture complex reticulate evolution, but sufficiently constrained to be meaningful.

Although at the species level evolution is not necessarily treelike, at the level of genes it is typically assumed that evolution follows a treelike process. Thus a phylogenetic network is often viewed as an amalgamation of gene trees. Taking this viewpoint, a natural problem is to enumerate the size of the set $T(\cN)$ consisting of the distinct rooted phylogenetic trees displayed (embedded) in a given phylogenetic network $\cN$. (Here, and throughout the paper, all phylogenetic networks and rooted phylogenetic trees are binary.) In general, computing $|T(\cN)|$ is \#P-complete~\cite{lin13}. However, if $\cN$ is normal and has $k$ reticulations, then $|T(\cN)|=2^k$, but it remains an open problem whether or not $|T(\cN)|$ can be computed in polynomial time if $\cN$ is tree-child. The difficulty with this problem is that a tree-child network can display the same rooted phylogenetic tree in two or more distinct ways. Nevertheless, it was recently shown that if $\cN$ is a tree-child network with $n\ge 3$ leaves, $0\le k\le n-1$ reticulations, and no (underlying) $3$-cycles, then
$$|T(\cN)|\ge 2^{\sfrac{k}{2}} $$
if $k$ is even, and
$$|T(\cN)|\ge \frac{3}{2\sqrt{2}} 2^{\sfrac{k}{2}}$$
if $k$ is odd, and that these lower bounds are sharp~\cite{sem26}. (Note that a tree-child network on $n$ leaves has at most $n-1$ reticulations.) So what are the analogous upper bounds for tree-child networks? If $\cN$ is a tree-child network with $n\ge 2$ leaves and $0\le k\le n-2$ reticulations, then, as for normal networks, $|T(\cN)|\le 2^k$. But what if $\cN$ has $n-1$ reticulations? The purpose of this note is to establish the following theorem.

\begin{theorem}
Let $\cN$ be a tree-child network with $n$ leaves, where $n\ge 2$. Then
$$|T(\cN)|\le 2^{n-1}-1.$$
Moreover, this upper bound is sharp.
\label{main}
\end{theorem}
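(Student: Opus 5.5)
Since a tree-child network on $n$ leaves has at most $n-1$ reticulations and displays at most $2^k$ trees when it has $k$ reticulations (one tree per choice of a reticulation edge at each reticulation), the bound is automatic when $k\le n-2$, because then $|T(\cN)|\le 2^{n-2}\le 2^{n-1}-1$ for $n\ge 2$. So the whole content is the case $k=n-1$. Here I will show that two distinct choices of reticulation edges always yield the same displayed tree, so that $|T(\cN)|\le 2^{n-1}-1$.

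The plan is to use the standard fact that a tree-child network has either a cherry or a reticulated cherry. Call $\{a,b\}$ a reticulated cherry if the parent $p_b$ of $b$ is a reticulation, one of whose parents is the parent $p_a$ of $a$, with $p_a$ a tree vertex. I claim that when $k=n-1$ there must be a reticulated cherry. To see this, reduce cherries: if $\{a,b\}$ is a cherry, delete $b$ and suppress the result. This gives a tree-child network on $n-1$ leaves that still has $n-1$ reticulations, contradicting the bound of $(n-1)-1$ reticulations. Hence $\cN$ has no cherry, and so it has a reticulated cherry $\{a,b\}$.

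I will then argue by induction on $n$. For $n=2$ with one reticulation, the network is the unique one in which both reticulation edges lead to the same displayed tree, so it displays $1=2^1-1$ tree. For the inductive step, consider a reticulated cherry $\{a,b\}$ with reticulation $p_b$, whose other parent is $u$. Deleting the edge $(u,p_b)$ and suppressing gives a network $\cN'$ in which $\{a,b\}$ is a cherry. Deleting the edge $(p_a,p_b)$ instead gives a network $\cN''$. Every tree in $T(\cN)$ is displayed by $\cN'$ or by $\cN''$, so $|T(\cN)|\le |T(\cN')|+|T(\cN'')|$.
\begin{itemize}
\item $\cN''$ has $n-2$ reticulations, so $|T(\cN'')|\le 2^{n-2}$.
\item Because $\{a,b\}$ is a cherry in $\cN'$, the trees displayed by $\cN'$ are in bijection with the trees displayed by the network $\cN'\backslash b$ on $n-1$ leaves. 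That network has $n-2$ reticulations, which is the maximum for $n-1$ leaves. By induction, $|T(\cN')|\le 2^{n-2}-1$.
\end{itemize}
Summing gives $|T(\cN)|\le 2^{n-1}-1$. The main obstacle is checking that $\cN'$ and $\cN''$ are still tree-child, since deleting an edge could leave $u$ or $p_a$ with no tree child. I expect this to follow from the tree-child property at $u$ (its other child is a tree vertex or a leaf) and at $p_a$, but it needs care. In particular, suppressing $p_a$ and $u$ must not create parallel edges, or any that arise must be handled.

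For sharpness, I will build a caterpillar-like family of networks recursively, each time attaching a new leaf as a reticulated cherry onto a network of the previous size. The goal is to show the two branches in the decomposition are disjoint and each attains its bound, so that equality $2^{n-2}+(2^{n-2}-1)$ holds. I will verify the base case $n=2$ directly. Disjointness should follow because in every tree from the $\cN'$ branch $\{a,b\}$ is a cherry, while the $\cN''$ branch places $b$ elsewhere; injectivity within $\cN''$ holds when $\cN''$ is normal-like and has no shortcuts.
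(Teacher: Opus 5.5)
Your proof of the inequality is sound and is essentially the paper's argument. It uses the same no-cherry step, the same reticulated cherry, and the same induction applied to $(\cN\ba(u,p_b))\ba b$. Writing $|T(\cN)|\le|T(\cN')|+|T(\cN'')|$ just repackages the paper's step of lifting the two embeddings of a duplicated tree from that network to $\cN$. The point you flag is exactly Lemma~\ref{deletion}: no parallel arcs arise, because the vertices receiving new in-arcs are tree vertices or leaves with a unique parent. Also, $\cN''$ need not be tree-child, since $|T(\cN'')|\le 2^{n-2}$ only counts reticulation-arc choices.

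The genuine gap is sharpness, which is half of the statement. You give no family. Moreover, the one mechanism you offer for $|T(\cN'')|=2^{n-2}$, that $\cN''$ has no shortcuts, can never apply. In your recursive scheme, $\cN''$ is the previous network $\cN_{n-1}$ with $b$ pendant on the arc subdivided by $u$. Tree-childness at $u$ forces that arc to be a tree arc, so all reticulation arcs and alternative paths of $\cN_{n-1}$ survive in $\cN''$. Since $\cN_{n-1}$ has $n-2$ reticulations but displays only $2^{n-2}-1$ trees, it is not normal, hence has a shortcut, and so does $\cN''$.

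What is actually needed is the following. Deleting $b$ maps the embeddings of $\cN''$ bijectively onto those of $\cN_{n-1}$. So $|T(\cN'')|=2^{n-2}$ holds if and only if the position of $b$ differs in the two embeddings of the unique duplicated tree of $\cN_{n-1}$. Your two branches are disjoint if and only if no tree of $\cN''$ has $\{a,b\}$ as a cherry. Both conditions depend entirely on where $u$ is placed, which you leave open. For instance, suppose $u$ subdivides the arc of $\cN_{n-1}$ into $a$, so $u$ is the parent of $p_a$ and there is an underlying $3$-cycle. Then both branches yield exactly the trees of $\cN_{n-1}$ with $b$ adjoined as a sibling of $a$, and $|T(\cN)|=2^{n-2}-1$.

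The paper supplies the missing content with the explicit family $\cN_n$ and a direct rooted-triple argument. Embeddings differing at some $v_i$ with $i\le n-2$ are separated by $(\ell_i,\ell_{n-1},\ell_n)$ versus $(\ell_i,\ell_n,\ell_{n-1})$ or $(\ell_{n-1},\ell_n,\ell_i)$. Of the pairs differing only at $v_{n-1}$, only the one using $(q_i,v_i)$ for all $i\le n-2$ coincides.

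Your template is compatible with this family, since deleting $(p_1,v_1)$ and then $\ell_1$ from $\cN_n$ returns $\cN_{n-1}$. The correct placement is $u=p_1$ on the arc from $p_2$ to $q_{n-1}$, which lies on the alternative path of the shortcut $(p_{n-1},v_{n-1})$. With this placement, the two embeddings of the duplicated tree give the distinct triples $(\ell_1,\ell_n,\ell_{n-1})$ and $(\ell_{n-1},\ell_n,\ell_1)$. Also, the sibling subtree of $\ell_1$ always contains $\ell_n$, so $\ell_1$ never forms a cherry with $\ell_{n-1}$ in that branch. But this choice and its verification are the substance of the sharpness claim, and they are missing from your proposal.
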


The proof of the inequality in Theorem~\ref{main} is not difficult and showing that it is sharp is really just a matter of finding a family of tree-child networks that reach this bound for all $n\ge 2$. However, the result is not what we expected or assumed. Given the prominent role tree-child networks are playing in mathematical phylogenetics, it is for these reasons that we have written this paper.

The paper is organised as follows. The next section contains some preliminaries including the statements of several results on tree-child networks. The proof of Theorem~\ref{main} is established in Section~\ref{proof}. If $\cN$ is a tree-child network with $n$ leaves and $|T(\cN)|=2^{n-1}-1$, then $\cN$ displays exactly one rooted phylogenetic tree $\cT$ in two distinct ways. We complete Section~\ref{proof} by showing that $\cT$ can always be canonically found by iteratively replacing a reticulated cherry with an (ordinary) cherry.

\section{Preliminaries}
\label{preliminaries}

Throughout the paper $X$ denotes a non-empty finite set.

\noindent {\bf Phylogenetic networks.} A {\em phylogenetic network $\cN$ on $X$} is a rooted acyclic directed graph with no parallel arcs satisfying the following properties:
\begin{enumerate}[(i)]
\item the (unique) root has out-degree two;

\item the set of vertices of out-degree zero is $X$ and each vertex in $X$ has in-degree one; and

\item all other vertices either have in-degree one and out-degree two or in-degree two and out-degree one.
\end{enumerate}
For technical reasons, if $|X|=1$, then $\cN$ consists of a single vertex labelled by the element in $X$. The elements in $X$ are called {\em leaves}. The vertices of in-degree one and out-degree two are {\em tree vertices}, while the vertices of in-degree-two and out-degree one are {\em reticulations}. The arcs directed into a reticulation are called {\em reticulation arcs}. All other arcs are {\em tree arcs}. A reticulation arc $(u, v)$ is a {\em shortcut} if there is a directed path from $u$ to $v$ that avoids traversing $(u, v)$. A {\em rooted phylogenetic $X$-tree} is a phylogenetic network on $X$ with no reticulations. Strictly speaking, what we have called a phylogenetic network and a rooted phylogenetic tree is typically referred as a ``binary phylogenetic network'' and a ``rooted binary phylogenetic tree'', respectively.

Let $\cN$ be a phylogenetic network on $X$, and let $a$ and $b$ be distinct elements in $X$. Let $p_a$ and $p_b$ denote the parents of $a$ and $b$, respectively. If $p_a=p_b$, then $\{a, b\}$ is a {\em cherry}. In this case, we denote the operation of deleting $b$ and suppressing the resulting degree-two vertex by $\cN\ba b$. Note that if $p_a$ is the root of $\cN$, then $\cN\ba b$ denotes the phylogenetic network obtained by deleting $b$ as well as deleting the root of $\cN$. Furthermore, if $p_b$ is a reticulation and $p_a$ is the parent of $p_b$, then $(b, a)$ is a {\em reticulated cherry} in which $b$ is the {\em reticulation leaf}. In general, if $e$ is a reticulation arc of $\cN$, we denote the operation of deleting $e$ and suppressing the resulting degree-two vertices by $\cN\ba e$. 
Again, if $e$ is incident with the root of $\cN$, then $\cN\ba e$ denotes the directed graph obtained by deleting $e$, suppressing the resulting degree-two vertex, and deleting the root. It is worth noting that after deleting a reticulation arc, the resulting directed graph may not be a phylogenetic network. However, if $\cN$ is tree-child, then $\cN\ba e$ is also tree-child (see Lemma~\ref{deletion}).

Let $\cN$ be a phylogenetic network on $X$ and let $\cT$ be a rooted phylogenetic $X$-tree. We say that $\cN$ {\em displays} $\cT$ if a subdivision of $\cT$ can be obtained from $\cN$ by deleting vertices and arcs. Such a subdivision is an {\em embedding} of $\cT$ in $\cN$, in which case $\cT$ is the {\em realisation} of this embedding. We say that $\cN$ {\em displays $\cT$ twice} if there exists two distinct embeddings in $\cN$ that realise~$\cT$. Without ambiguity, we view an embedding of a phylogenetic tree as a subset of the arc set of $\cN$. We write~$T(\cN)$ to denote the set of all displayed trees of $\cN$.
An embedding of $\cT$ in $\cN$ contains at most one of the two reticulation arcs directed into a reticulation, and so, if $\cN$ has $k$ reticulations, then $|T(\cN)|\le 2^k$.

\noindent {\bf Phylogenetic trees.} Let $\cT$ be a rooted phylogenetic $X$-tree, and let $a$, $b$, and $c$ be distinct elements of $X$. The $3$-tuple $(a, b, c)$, or equivalently $(b, a, c)$ is a {\em rooted triple} of $\cT$ if the (underlying unrooted) path from $a$ to $b$ avoids intersecting the path from $c$ to the root of $\cT$. Furthermore, two rooted phylogenetic $X$-trees $\cT_1$ and $\cT_2$ are {\em isomorphic} if there is a map $\varphi: V(\cT_1)\rightarrow V(\cT_2)$ such that $\varphi(x)=x$ for all $x\in X$, and $(u, v)\in E(\cT_1)$ if and only if $(\varphi(u), \varphi(v))\in E(\cT_2)$.

\noindent {\bf Tree-child networks.} Let $\cN$ be a phylogenetic network on $X$. We say that $\cN$ is {\em tree-child} if every non-leaf vertex of $\cN$ is the parent of a tree vertex or a leaf. A tree-child network is {\em normal} if it has no short cuts. In general, the size of $X$ does not bound the total number of reticulations of a phylogenetic network but, for tree-child networks, we have the following.

\begin{proposition}
Let $\cN$ be a tree-child network with $n$ leaves. Then $\cN$ has at most $n-1$ reticulations.
\label{n-1}
\end{proposition}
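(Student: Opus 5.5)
We prove Proposition~\ref{n-1} by a counting argument that injects the set of reticulations into $X$ minus one leaf. The key tool is the tree-child property. For each non-leaf vertex $v$ of $\cN$, starting at $v$ and repeatedly moving to a child that is a tree vertex or a leaf gives a \emph{tree path}: every vertex on it after $v$ is a tree vertex or a leaf, and it ends at a leaf, since $\cN$ is finite and acyclic. For each reticulation $r$, its unique child $c$ is, by the tree-child property, a tree vertex or a leaf. We choose a tree path starting at $r$ and let $\ell(r)$ be the leaf at which it ends.

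The first step is to show that $r\mapsto \ell(r)$ is injective. Suppose two distinct reticulations $r\ne r'$ satisfy $\ell(r)=\ell(r')=x$. Every vertex on these paths other than $r$ and $r'$ has in-degree one. Walk backwards from $x$: each non-initial vertex on either path has a unique parent. Hence the two paths coincide backwards from $x$ until one of them reaches its starting reticulation. Say the path from $r$ is reached first; then $r$ lies on the path from $r'$ as a non-initial vertex. But non-initial vertices on a tree path are tree vertices or leaves, not reticulations, a contradiction. So the map is injective, and the number of reticulations is at most $n$.

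The second step, the main point, is to gain one more. Apply the same construction to the root $\rho$, a non-leaf vertex, with a tree path from $\rho$ ending at a leaf $\ell(\rho)$. We must show $\ell(\rho)\ne\ell(r)$ for every reticulation $r$. The backward argument from the first step still applies: the paths share a suffix, and the earlier starting point lies on the other path as a non-initial vertex. The root cannot be a non-initial vertex of any path, since it has in-degree zero. A reticulation cannot be a non-initial vertex of a tree path either. Hence the extended map from $\{\rho\}\cup\{\text{reticulations}\}$ to $X$ is injective. (Distinctness of the starting vertices is automatic, since the root is not a reticulation.)

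It follows that $1+k\le n$, that is, $k\le n-1$. The case $|X|=1$ is trivial, since then there are no reticulations. The only delicate point is the observation that two tree paths ending at the same leaf must be nested. This follows because every vertex strictly after the start has in-degree one, so the backward walk is forced.
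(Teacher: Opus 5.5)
Your proof is correct. Two tree paths ending at the same leaf must be nested, because every non-initial vertex has in-degree one. Neither the root (in-degree zero) nor a reticulation can be a non-initial vertex of a tree path. So the tree paths from the root and from the $k$ reticulations end at $k+1$ distinct leaves, which gives $k\le n-1$.

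The paper gives no proof of this proposition; it is quoted as a known fact about tree-child networks. Your tree-path injection is essentially the standard argument from the literature (cf.\ Cardona, Rossell\'o and Valiente~\cite{car09}).

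If you wanted a proof using only the tools the paper states, you could induct on $n$ via Lemma~\ref{cherry}. Reducing a cherry $\{a,b\}$ to $\cN\ba b$ loses a leaf but no reticulation. Reducing a reticulated cherry $(b,a)$ to $(\cN\ba (g_b,p_b))\ba b$ via Lemma~\ref{deletion} loses one leaf and one reticulation. In both cases the bound for $n-1$ leaves gives $k\le n-1$; this is the same reduction the paper uses in the proof of Theorem~\ref{main}.

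Your direct argument has two advantages over that induction. It is self-contained, needing nothing beyond the definition of tree-child rather than the cited cherry and deletion lemmas. It also carries over verbatim to non-binary tree-child networks, since it only uses that tree vertices and leaves have in-degree one.
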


The next three lemmas are used in the proof of Theorem~\ref{main}. The first is well known and straightforward to prove (see, for example, \cite{bor16}), the second is established in~\cite{doe21}, and the third follows from~\cite[Theorem 1.1]{sem16}.

\begin{lemma}
Let $\cN$ be a tree-child network. Then $\cN$ has either a cherry or a reticulated cherry.
\label{cherry}
\end{lemma}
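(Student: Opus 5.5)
We prove Lemma~\ref{cherry} by following a longest path. Choose a directed path $P$ in $\cN$ from the root to a leaf such that the number of arcs in $P$ is maximum; $P$ exists because $\cN$ is finite and acyclic. Let $x$ be the last vertex of $P$, a leaf, and let $u$ be the vertex immediately before $x$. Then $u$ is the parent $p_x$ of $x$, which is either a tree vertex or a reticulation. The idea is that maximality of $P$ forces all other children near the end of $P$ to be leaves. The tree-child property is used to push through the case where $u$ is a reticulation.

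\emph{Case 1: $u$ is a tree vertex.} Let $w$ be the other child of $u$. If $w$ were not a leaf, then $w$ would have a child, and replacing the arc $(u,x)$ of $P$ by $(u,w)$ followed by an arc out of $w$ and a path to a leaf would give a longer root-to-leaf path, a contradiction. So $w$ is a leaf, and $\{x,w\}$ is a cherry.

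\emph{Case 2: $u$ is a reticulation.} Let $v$ be the vertex preceding $u$ on $P$. Since $u$ is a reticulation, $v$ is not the root only if it has further structure; in any case $v$ has out-degree two, so $v$ is either the root or a tree vertex. Let $w$ be the other child of $v$. By the tree-child property, $v$ has a child that is a tree vertex or a leaf. Since $u$ is a reticulation, this child must be $w$, so $w$ is a tree vertex or a leaf.

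Suppose $w$ is a tree vertex. Then $w$ has a child, and a path from $v$ through $w$ to a leaf has at least $3$ arcs from $v$. The segment $v\to u\to x$ has only $2$ arcs, so replacing it gives a longer path, contradicting maximality. Hence $w$ is a leaf. Then $p_x=u$ is a reticulation and $p_w=v$ is a parent of $p_x$, so $(x,w)$ is a reticulated cherry.

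The only delicate point is Case~2: we must use tree-child to rule out $w$ being a reticulation. A reticulation $w$ would also have a child, and maximality would then force its child to be a leaf. Without the tree-child hypothesis, that configuration would give neither a cherry nor a reticulated cherry. With the hypothesis the argument closes.

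Finally, the degenerate case $|X|=1$ consists of a single vertex, so the lemma is understood for $|X|\ge 2$. There the root has out-degree two and $P$ has at least two arcs, so $u$ exists and is not a leaf. This covers every case and completes the proof.
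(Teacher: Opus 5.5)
There is a genuine gap in Case~2: the arc count there is off by one, so the contradiction you claim does not occur. If $w$ is a tree vertex with a child $c$, the detour $v\to w\to c$ has $2$ arcs, not ``at least $3$''. When $c$ is a leaf, this detour ties exactly with the segment $v\to u\to x$, so maximality of $P$ is not violated.

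Your intermediate conclusion ``hence $w$ is a leaf'' is in fact false in general. Take a root $\rho$ with children $v$ and $g$. Let $v$ have children $u$ and $w$, and let $g$ have children $u$ and a leaf $t$. Let the reticulation $u$ have the leaf child $x$, and let the tree vertex $w$ have leaf children $y$ and $z$. This network is tree-child, and every longest root-to-leaf path has $3$ arcs. The path $P=\rho,v,u,x$ is one of them, yet the other child $w$ of $v$ is a tree vertex, not a leaf. Your closing paragraph already observes that a reticulation $w$ with a leaf child only ties in length; exactly the same happens for a tree vertex $w$.

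The repair is short. Suppose $w$ is a tree vertex and $c$ is a child of $w$. Follow $P$ from the root to $v$ and then take $v\to w\to c$; this path has the same number of arcs as $P$. If $c$ were not a leaf, you could extend this path and beat $P$, so $c$ is a leaf. Hence both children of $w$ are leaves, and they form a cherry (they are distinct since there are no parallel arcs). Case~2 should therefore conclude with two outcomes. Either $w$ is a leaf, giving the reticulated cherry $(x,w)$, or $w$ is a tree vertex whose two children form a cherry.

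Two smaller points also need tidying:
\begin{enumerate}[(i)]
\item Your assertion that $v$ has out-degree two requires the tree-child property. If $v$ were a reticulation, its only child $u$ would be a reticulation, which tree-child forbids.
\item The vertex $u$ can be the root. For example, when $\cN$ is the tree on two leaves, $P$ has a single arc, so the claim ``$P$ has at least two arcs'' is false. Your Case~1 argument does cover the root verbatim.
\end{enumerate}

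The paper gives no proof of this lemma; it cites it as well known. With these fixes, your longest-path argument is a complete proof.
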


\begin{lemma}
Let $\cN$ be a tree-child network on $X$ and let $e$ be a reticulation arc of $\cN$. Then $\cN\ba e$ is a tree-child network on $X$.
\label{deletion}
\end{lemma}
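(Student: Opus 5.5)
The plan is to verify directly that deleting a single reticulation arc $e=(u,v)$ and suppressing the resulting degree-two vertices produces a phylogenetic network on $X$, and that every vertex keeps a child that is a tree vertex or a leaf. Let $w$ be the other parent of $v$ and $c$ its unique child. The argument repeatedly uses one consequence of tree-childness: a reticulation cannot be the child of a reticulation, and a vertex cannot have two reticulation children.

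\emph{First step: identify $u$ and its other child.} The tail $u$ is not a reticulation, since otherwise its only child $v$ would be a reticulation. So $u$ is either a tree vertex or the root. Let $u'$ be the other child of $u$. Since $v$ is a reticulation, tree-childness of $u$ forces $u'$ to be a tree vertex or a leaf. Likewise, since $v$ is a reticulation, $c$ is a tree vertex or a leaf.

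\emph{Second step: check that $\cN\ba e$ is a phylogenetic network.} After deleting $e$, the vertex $v$ has in- and out-degree one, and suppressing it creates the arc $(w,c)$. If $u$ is a tree vertex with parent $p$, suppressing $u$ creates the arc $(p,u')$. Acyclicity is inherited, since $\cN\ba e$ is obtained from a subgraph of $\cN$ by suppression. For parallel arcs, there are two cases.
\begin{enumerate}[(i)]
\item If $(w,c)$ were already an arc, then $c$ would have parents $v$ and $w$. Hence $c$ would be a reticulation child of the reticulation $v$, which is impossible.
\item If $(p,u')$ were already an arc, then $u'$ would have parents $p$ and $u$. Hence $u'$ would be a reticulation, contradicting the first step.
\end{enumerate}
If $u$ is the root, we delete it and $u'$ becomes the new root, so we must show that $u'$ has out-degree two. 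Since $u'$ is a tree vertex or a leaf, it suffices to rule out a leaf. The vertex $w$ is reachable from the root $u$. It cannot be reached through $v$, as that would create a cycle through $v$ and $w$. So it must be reached through $u'$, and therefore $u'$ is not a leaf. The degree conditions on all remaining vertices are unchanged, and the leaf set is still $X$.

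\emph{Third step: check the tree-child property.} The in-degree, and hence the type, of every surviving vertex is unchanged. The only vertices whose child sets change are $w$, where $v$ is replaced by $c$, and $p$ (when it exists), where $u$ is replaced by $u'$. By the first step, both $c$ and $u'$ are tree vertices or leaves. So $w$ and $p$ still have a tree-vertex or leaf child, and every other vertex keeps its original children.

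The main obstacle is the degenerate bookkeeping: the root case and ruling out parallel arcs. Both are handled by the observation that $v$ being a reticulation forces its siblings and its child to be non-reticulations.
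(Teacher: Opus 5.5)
Your proof is correct. The paper gives no argument of its own for this lemma; it simply cites \cite{doe21}. So your direct local verification replaces a citation rather than competing with an in-paper proof. What it buys is self-containment, and it makes clear that the statement is purely local: only the neighbourhood of $u$ and $v$ changes. The cost is some case bookkeeping.

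On that bookkeeping, three small cases are glossed over, each settled by your first step.
(a) The two new arcs $(w,c)$ and $(p,u')$ could coincide only if $w=p$ and $c=u'$. But $c=u'$ would give $u'$ the two parents $u$ and $v$, making it a reticulation.
(b) If $w=p$ (so $p,u,v$ form an underlying $3$-cycle), both replacements occur at the same vertex. Its new children $u'$ and $c$ are still tree vertices or leaves, so the tree-child condition holds there.
(c) In the root case $u'$ changes from a tree vertex to the root, so your sentence that the type of every surviving vertex is unchanged is not literally true. This is harmless, because $u'$ is then nobody's child.

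With these remarks added, the argument is complete.
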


\begin{lemma}
Let $\cN$ be a tree-child network on $X$ and let $E$ be a subset of the arcs of $\cN$. Then $E$ is an embedding of a rooted phylogenetic $X$-tree displayed by $\cN$ if and only if $E$ consists of all tree arcs of $\cN$ and, for each reticulation $v$, exactly one reticulation arc directed into $v$.
\label{embeddings}
\end{lemma}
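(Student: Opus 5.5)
The plan is to prove both directions of Lemma~\ref{embeddings} directly from the tree-child property. One observation does most of the work: an arc whose head is not a reticulation is a tree arc. Since $\cN$ is tree-child, every non-leaf vertex $u$ therefore has at least one out-going tree arc. Iterating this, every vertex $v$ of $\cN$ has a \emph{tree path} to some leaf $x\in X$, that is, a directed path all of whose arcs are tree arcs. This path is obtained by repeatedly moving to a child that is a tree vertex or a leaf. The out-arc of a reticulation is always a tree arc, since its child is not a reticulation. Throughout, I will use the convention implicit in the statement: an embedding is viewed as the arc set of a subgraph rooted at the root $\rho$ of $\cN$, possibly with a degree-one root followed by a path to the first branching vertex. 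The main subtlety I expect is checking that this convention matches the definition of ``subdivision'' for the root edge, and I would state it explicitly.

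For the backward direction, let $E$ consist of all tree arcs together with one chosen reticulation arc into each reticulation $v$. In the subgraph $(V(\cN),E)$, every vertex other than $\rho$ has in-degree exactly one. For tree vertices and leaves this is their unique (tree) in-arc, and for reticulations it is the chosen arc. Because $\cN$ is acyclic, $(V(\cN),E)$ is then a directed tree rooted at $\rho$ spanning all vertices. By the observation above, every non-leaf vertex of $\cN$ retains an out-going tree arc in $E$, so the vertices of out-degree zero in $E$ are exactly the elements of $X$. All out-degrees are at most two. Hence suppressing the vertices of in-degree one and out-degree one (and trimming the root path if required by the convention) yields a rooted binary phylogenetic $X$-tree. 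Thus $E$ is an embedding of a tree displayed by $\cN$.

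For the forward direction, let $E$ be an embedding of some displayed tree $\cT$. Since $E$ is a subdivision of a tree, each vertex it contains has in-degree at most one in $E$. In particular $E$ contains at most one arc into each reticulation, and every vertex in $E$ other than its root has exactly one in-arc in $E$. Now take any tree arc $(u,w)$. Choose a tree path $w=w_0,w_1,\dots,w_m=x$ with $x\in X$. The leaf $x$ lies in $E$, and its in-arc $(w_{m-1},x)$ is its unique in-arc in $\cN$, so this arc lies in $E$. Inducting backwards along the path, each $w_i$ lies in $E$ and is a non-reticulation other than $\rho$. So its unique in-arc in $\cN$ must be in $E$, and this gives $(u,w)\in E$. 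Hence $E$ contains every tree arc. Finally, for a reticulation $v$, its out-arc is a tree arc and so lies in $E$. Then $v$ is a non-root vertex of the embedding, so exactly one of its two in-arcs lies in $E$. Together with the at-most-one bound, this shows $E$ has the stated form, completing the proof.
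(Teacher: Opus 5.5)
Your proof is correct, but it is not the paper's route. The paper gives no argument for Lemma~\ref{embeddings}; it derives the lemma from \cite[Theorem~1.1]{sem16}, a result about networks in which every embedded phylogenetic tree is a base tree. You instead prove the statement from first principles, and one observation does double duty: in a tree-child network every vertex has a directed path to a leaf using only tree arcs. In the backward direction this rules out dead ends, so the out-degree-zero vertices of $E$ are exactly $X$. In the forward direction you walk backwards from the leaf at the end of such a path; since each vertex on it differs from $\rho$ and has a unique in-arc in $\cN$, every tree arc is forced into $E$. The citation buys brevity and places the lemma inside a more general structural result. Your argument is self-contained and elementary, and it shows exactly where the tree-child hypothesis enters. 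Your explicit root-path convention is also a genuine point, not pedantry. The paper leaves it implicit, yet the forward direction fails if embeddings are read as subdivisions in the strict sense. For example, for $n\ge 3$, any embedding in $\cN_n$ containing $(q_{n-1},v_{n-1})$ leaves the root $p_{n-1}$ with out-degree one. A strict subdivision of the realised tree would then omit the tree arc $(p_{n-1},p_{n-2})$.
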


\section{Maximum Displaying Tree-Child Networks}
\label{proof}

In this section, we prove Theorem~\ref{main}. Furthermore, given a tree-child network $\cN$ on~$n\ge 3$ leaves which displays~$2^{n-1}-1$ rooted phylogenetic trees, we show how to canonically find the rooted phylogenetic tree displayed twice by $\cN$ (see Corollary~\ref{cor:DuplicateTree}). Throughout this section, we freely use Lemma~\ref{embeddings}.

\begin{figure}
	\centering
	\begin{subfigure}[b]{0.34\textwidth}
		\centering
		\begin{tabular}[b]{c}
			\resizebox{\textwidth}{\textwidth}{%
				
			\begin{tikzpicture}[
				scale=1, 
				treevertex/.style={circle, fill=black, inner sep=1.8pt},
				leafvertex/.style={circle, fill=black, inner sep=1.8pt},
				reticvertex/.style={rectangle, fill=black, inner sep=2.5pt},
				treeedge/.style={thick},
				reticedge/.style={thick, dashed}
				]

				\node[treevertex, label=above:$p_4$] (p4) at (2.4,5) {};
				\node[treevertex, label=above right:$p_3$] (p3) at (3,4.5) {};
				\node[treevertex, label=above right:$p_2$] (p2) at (3.6,4) {};
				\node[treevertex, label=above right:$p_1$] (p1) at (4.2,3.5) {};
				\node[treevertex, label=right:$q_4$] (q4) at (4.6,3.0) {};
				\node[leafvertex, label=below:$\ell_5$] (l5) at (4.6,0) {};
				
				\node[reticvertex, label=above left:$v_4$] (v4) at (0.7,2.5) {};
				\node[treevertex, label=above left:$q_3$] (q3) at (0.4,2.0) {};
				\node[treevertex, label=above left:$q_2$] (q2) at (0.1,1.6) {};
				\node[treevertex, label=above left:$q_1$] (q1) at (-0.2,1.2) {};
				\node[leafvertex, label=below:$\ell_4$] (l4) at (-0.2,0) {};
				
				\node[leafvertex, label=below:$\ell_3$] (l3) at (1.0,0) {};
				\node[leafvertex, label=below:$\ell_2$] (l2) at (2.2,0) {};
				\node[leafvertex, label=below:$\ell_1$] (l1) at (3.4,0) {};
				
				\node[reticvertex, label=right:$v_3$] (v3) at (1.0,1.6) {};
				\node[reticvertex, label=right:$v_2$] (v2) at (2.2,0.95) {};
				\node[reticvertex, label=right:$v_1$] (v1) at (3.4,0.4) {};
				
				\draw[treeedge] (p4) -- (p3) -- (p2) -- (p1) -- (q4) -- (l5);
				\draw[treeedge] (p4) -- (v4) -- (q3) -- (q2) -- (q1) -- (l4);
				\draw[treeedge] (p3) -- (v3) -- (l3);
				\draw[treeedge] (p2) -- (v2) -- (l2);
				\draw[treeedge] (p1) -- (v1) -- (l1);
				\draw[treeedge] (q3) -- (v3);
				\draw[treeedge] (q2) -- (v2);
				\draw[treeedge] (q1) -- (v1);
				
				\draw[treeedge] (q4) -- (v4);
				
			\end{tikzpicture}
		}
		\end{tabular}
		\caption{$\cN_5$}
		\label{fig:network}
	\end{subfigure}
	\hfill
	\begin{subfigure}[b]{0.34\textwidth}
		\centering
		\begin{tabular}[b]{c}
			\resizebox{\textwidth}{\textwidth}{%
				
			\begin{tikzpicture}[
				scale=1, 
				treevertex/.style={circle, fill=black, inner sep=1.8pt},
				leafvertex/.style={circle, fill=black, inner sep=1.8pt},
				reticvertex/.style={rectangle, fill=black, inner sep=2.5pt},
				treeedge/.style={thick},
				reticedge/.style={thick, dashed}
				]

				\node[treevertex, label=above:$p_4$] (p4) at (2.4,5) {};
				\node[treevertex, label=right:$q_4$] (q4) at (4.6,3.0) {};
				\node[leafvertex, label=below:$\ell_5$] (l5) at (4.6,0) {};
				
				\node[reticvertex, label=above left:$v_4$] (v4) at (0.7,2.5) {};
				\node[treevertex, label=above left:$q_3$] (q3) at (0.4,2.0) {};
				\node[treevertex, label=above left:$q_2$] (q2) at (0.1,1.6) {};
				\node[treevertex, label=above left:$q_1$] (q1) at (-0.2,1.2) {};
				\node[leafvertex, label=below:$\ell_4$] (l4) at (-0.2,0) {};
				
				\node[leafvertex, label=below:$\ell_1$] (l3) at (1.0,0) {};
				\node[leafvertex, label=below:$\ell_2$] (l2) at (2.2,0) {};
				\node[leafvertex, label=below:$\ell_3$] (l1) at (3.4,0) {};
				
				
				\draw[treeedge] (p4) -- (q4) -- (l5);
				\draw[treeedge] (p4) -- (v4) -- (q3) -- (q2) -- (q1) -- (l4);
				\draw[treeedge] (q3) -- (l1);
				\draw[treeedge] (q2) -- (l2);
				\draw[treeedge] (q1) -- (l3);
				
				\draw[treeedge] (q4) -- (v4);
				
			\end{tikzpicture}
		}
		\end{tabular}
		\caption{$\cM_5$}
		\label{fig:Mnetwork}
	\end{subfigure}
	\hfill 
	\begin{subfigure}[b]{0.30\textwidth}
		\centering
		\begin{tabular}[b]{c}
			\resizebox{\textwidth}{\textwidth}{%
				
			\begin{tikzpicture}[
				scale=0.8,
				treevertex/.style={circle, fill=black, inner sep=1.8pt},
				leafvertex/.style={circle, fill=black, inner sep=1.8pt},
				treeedge/.style={thick}
				]
				\node[treevertex] (r) at (2.5,4) {};
				\node[treevertex] (i1) at (2.0,3.2) {};
				\node[treevertex] (i2) at (1.5,2.4) {};
				\node[treevertex] (i3) at (1.0,1.6) {};
				
				\node[leafvertex, label=below:$\ell_5$] (l5) at (4.0,0) {};
				\node[leafvertex, label=below:$\ell_3$] (l1) at (3.0,0) {};
				\node[leafvertex, label=below:$\ell_2$] (l2) at (2.0,0) {};
				\node[leafvertex, label=below:$\ell_1$] (l3) at (1.0,0) {};
				\node[leafvertex, label=below:$\ell_4$] (l4) at (0.0,0) {};
				
				\draw[treeedge] (r) -- (i1) -- (i2) -- (i3);
				\draw[treeedge] (r) -- (l5);
				\draw[treeedge] (i1) -- (l1);
				\draw[treeedge] (i2) -- (l2);
				\draw[treeedge] (i3) -- (l3);
				\draw[treeedge] (i3) -- (l4);
			\end{tikzpicture}
		}
		\end{tabular}
		\caption{$\cC_5$}
	\end{subfigure}
	\caption{(I) Tree-child network~$\cN_5$, (II) a tree-child network~$\cM_5$ obtained by iteratively removing~$(p_i,v_i)$ for~$i\in\{1,2,3\}$, (III) a phylogenetic tree~$\cC_5$ which is displayed twice by~$\cN_5$.}
	\label{spinal}
\end{figure}

The proof of Theorem~\ref{main} requires us to show that, for each $n\ge 2$, there is a tree-child network with $n$ leaves and $n-1$ reticulations which displays $2^{n-1}-1$ rooted phylogenetic trees. To this end, we begin this section by constructing, for all $n\ge 2$, a tree-child network $\cN_n$ on $n$ leaves whose number of displayed rooted phylogenetic trees totals this upper bound. Illustrations of $\cN_5$ and the unique rooted phylogenetic tree~$\cC_5$ which is displayed twice by~$\cN_5$ are shown in Figure~\ref{spinal}.

For each $n\ge 2$, let $P_n$ denote the directed path
\begin{equation*}
p_{n-1},p_{n-2},\ldots, p_1, q_{n-1}, v_{n-1}, q_{n-2},q_{n-3}, \ldots, q_1, \ell_{n-1}.
\end{equation*}
The vertex $v_{n-1}$ will eventually be a reticulation, while $p_{n-1}$ and $\ell_{n-1}$ will be the root and a leaf, respectively, of $\cN_n$. 
For each $i\in \{1, 2, \ldots, n-2\}$, add a new vertex $v_i$ and adjoin $v_i$ to $P_n$ with the arcs $(p_i, v_i)$ and $(q_i, v_i)$, and adjoin a new leaf $\ell_i$ to $v_i$ with the arc $(v_i, \ell_i)$. Finally, add the arcs~$(p_{n-1},v_{n-1})$ and $(q_{n-1},\ell_n)$.
For all $n\ge 2$, let $\cN_n$ denote the resulting directed graph. It is easily checked that $\cN_n$ is tree-child with $n$ leaves and $n-1$ reticulations. Furthermore, for each $n\ge 2$, let $\cC_n$ denote the rooted phylogenetic tree realised by the two distinct embeddings containing the reticulation arcs in
$$\{(p_1, v_1), (p_2, v_2), \ldots, (p_{n-2}, v_{n-2})\}.$$
It is easily seen that $\cC_n$ is well defined. We now prove Theorem~\ref{main}.

\begin{proof}[Proof of Theorem~\ref{main}]
Let $n\ge 2$. If $\cN$ has at most~$n-2$ reticulations, then $|T(\cN)|\le 2^{n-2}\le 2^{n-1}-1$ and the inequality holds. Thus we may assume that $\cN$ has exactly~$n-1$ reticulations. We prove the inequality in the theorem by induction on the number $n$ of leaves of $\cN$. If $n=2$, then there is exactly one rooted phylogenetic tree on the same set of two leaves, and so $|T(\cN)|\le 1$ and the inequality holds. If $n=3$, then, as there are exactly three distinct phylogenetic trees on the same set of three leaves, $|T(\cN)|\le 3$ and, again, the inequality holds. Now suppose that $n\ge 4$ and the inequality holds for all tree-child networks with at least two leaves and fewer than $n$ leaves.

By Lemma~\ref{cherry}, $\cN$ has either a cherry or a reticulated cherry. If~$\cN$ has a cherry, say~$\{a, b\}$, then $\cN\ba b$ is a tree-child network with $n-1$ leaves and $n-1$ reticulations, contradicting Proposition~\ref{n-1}.
Thus $\cN$ has a reticulated cherry, say $(b, a)$, where $b$ is the reticulation leaf. Let $p_a$ and $p_b$ denote the parents of $a$ and $b$, respectively, and let $g_b$ denote the parent of $p_b$ that is not $p_a$. By Lemma~\ref{deletion}, $\cN\ba (g_b, p_b)$ is tree-child and so, as $\{a, b\}$ is a cherry of $\cN\ba (g_b, p_b)$, it follows that
$$\cN' = \big(\cN\ba (g_b, p_b)\big)\ba b$$
is also tree-child. Since $\cN'$ is a tree-child network with~$n-1$ leaves and $n-2$ reticulations, it follows by the induction assumption that $|T(\cN')|\le 2^{n-2}-1$.
This implies that~$\cN'$ displays a rooted phylogenetic tree~$\cT'$ twice. Let $E'_1$ and $E'_2$ be the two distinct embeddings realising $\cT'$. Consider the rooted phylogenetic tree~$\cT$ obtained from~$\cT'$ by subdividing the arc incident to the leaf~$a$ with a vertex~$p_a$, and adjoining a new leaf $b$ with the arc~$(p_a, b)$.

We claim that~$\cN$ displays~$\cT$ twice. Let $p'_a$ denote the parent of $a$ in $\cN'$. For each $i\in \{1, 2\}$, let
$$E_i=\big(E'_i-\{(p'_a, a)\}\big)\cup \{(p'_a, p_a), (p_a, a), (p_a, p_b), (p_b, b)\}.$$
Since $(p'_a, a)\in E'_1\cap E'_2$, the arc $(p_a,p_b)$ is an arc in $\cN$, and $E'_1$ and $E'_2$ are two distinct embeddings of $\cT'$ in $\cN'$, it follows that $E_1$ and $E_2$ are two distinct embeddings of $\cT$ in $\cN$.
Thus~$|T(\cN)| \le 2^{n-1} - 1$, thereby establishing the 
inequality in Theorem~\ref{main}.

We next show that, for all $n\ge 2$, we have $|T(\cN_n)| = 2^{n-1}-1$.
Let $n\ge 2$ be a positive integer. Let~$\cS$ denote the set of all embeddings of all the phylogenetic trees displayed by $\cN_n$.


\begin{sublemma}\label{cla:DiffEmbeddings}
Let~$E_1$ and $E_2$ be distinct embeddings in $\cS$, and suppose that, for some $i\in\{1, 2, \ldots, n-2\}$, we have $(p_i, v_i)\in E_1$ and $(q_i, v_i)\in E_2$. Then the rooted phylogenetic trees realised by $E_1$ and $E_2$ are non-isomorphic.
\end{sublemma}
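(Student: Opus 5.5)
The plan is to exhibit a rooted triple that lies in the tree realised by $E_2$ but not in the tree realised by $E_1$. Isomorphism of rooted phylogenetic $X$-trees fixes the leaves, so it preserves the set of rooted triples, and such a triple settles the claim. The natural candidate is $(\ell_i, \ell_{n-1}, \ell_n)$. Throughout I use Lemma~\ref{embeddings}: each of $E_1$ and $E_2$ contains every tree arc of $\cN_n$ together with exactly one reticulation arc into each reticulation. In particular both contain the arcs of $P_n$ other than the reticulation arc $(q_{n-1}, v_{n-1})$, as well as $(q_{n-1}, \ell_n)$, $(v_i,\ell_i)$ and $(v_{n-1}, q_{n-2})$. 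So in both embeddings the path from $\ell_n$ to the root is $\ell_n, q_{n-1}, p_1, p_2, \ldots, p_{n-1}$.

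First I will treat $E_2$, which contains $(q_i, v_i)$. The vertex $q_i$ lies on the directed path $v_{n-1}, q_{n-2}, \ldots, q_1, \ell_{n-1}$, and this path lies in $E_2$. Hence both $\ell_i$ and $\ell_{n-1}$ are reached in $E_2$ from $v_{n-1}$ by directed paths through $q_i$. The underlying path between $\ell_i$ and $\ell_{n-1}$ therefore uses only vertices among $v_i, q_i, q_{i-1}, \ldots, q_1$ and the two leaves. None of these lies on the root path $\ell_n, q_{n-1}, p_1, \ldots, p_{n-1}$ of $\ell_n$. After suppressing degree-two vertices, this shows that $(\ell_i, \ell_{n-1}, \ell_n)$ is a rooted triple of the tree realised by $E_2$.

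Second, I will treat $E_1$, which contains $(p_i, v_i)$. Since $v_i$ has in-degree one in $E_1$, the ancestors of $\ell_i$ in $E_1$ are exactly $v_i, p_i, p_{i+1}, \ldots, p_{n-1}$. Every one of these except $v_i$ is also an ancestor of $\ell_n$, and $v_i$ has only the descendant $\ell_i$, so $v_i$ is suppressed in the realised tree. Consequently, in that tree, every cluster containing $\ell_i$ and at least one other leaf also contains $\ell_n$. The path from $\ell_i$ to $\ell_{n-1}$ passes through their lowest common ancestor, which is some $p_j$ with $j\ge i$ and lies on the root path of $\ell_n$. So $(\ell_i, \ell_{n-1}, \ell_n)$ is not a rooted triple there, and the two realised trees are non-isomorphic.

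The main obstacle is the bookkeeping in the $E_1$ case: one must check carefully that, in $E_1$, $\ell_i$ has no ancestor that is not also an ancestor of $\ell_n$. This uses that $v_i$ has in-degree one in $E_1$ and that all of $p_i, \ldots, p_{n-1}$ lie on $\ell_n$'s root path. The argument is the same whichever of the two arcs into $v_{n-1}$ each embedding uses, so no case split on that choice should be needed.
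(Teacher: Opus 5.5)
Your proof is correct and takes the paper's approach: you distinguish the two trees by the rooted triple $(\ell_i,\ell_{n-1},\ell_n)$, which holds in the tree realised by $E_2$ and fails in the tree realised by $E_1$. The only difference is in handling $E_1$: the paper names which triple on $\{\ell_i,\ell_{n-1},\ell_n\}$ that tree has, splitting on which arc into $v_{n-1}$ is used, whereas your ancestor argument shows directly that $(\ell_i,\ell_{n-1},\ell_n)$ fails and so needs no case split.
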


To see (\ref{cla:DiffEmbeddings}), let $\cT_1$ and $\cT_2$ be the rooted phylogenetic trees realised by $E_1$ and $E_2$. Then $(\ell_i, \ell_{n-1}, \ell_n)$ is a rooted triple of $\cT_2$, and either $(\ell_i, \ell_n, \ell_{n-1})$ or $(\ell_{n-1}, \ell_n, \ell_i)$ is a rooted triple of $\cT_1$ depending on whether $(p_{n-1}, v_{n-1})\in E_1$ or $(q_{n-1}, v_{n-1})\in E_1$. Thus $\cT_1\not\cong \cT_2$, completing the proof of (\ref{cla:DiffEmbeddings}).


Now let~$E_1,E_2\in \cS$ be distinct embeddings, and let $\cT_1$ and $\cT_2$ be the rooted phylogenetic trees realised by $E_1$ and $E_2$, respectively. It follows by (\ref{cla:DiffEmbeddings}) that $\cT_1\not\cong \cT_2$ unless
$$(E_1-E_2)\cup (E_2-E_1)=\{(p_{n-1}, v_{n-1}), (q_{n-1}, v_{n-1})\}.$$
Without loss of generality, we may assume that $(p_{n-1}, v_{n-1})\in E_1$ and $(q_{n-1}, v_{n-1})\in E_2$. If, for all $i\in \{1, 2, \ldots, n-2\}$, we have $(q_i, v_i)\in E_1\cap E_2$, then $\cT_1\cong \cT_2\cong \cC_n$ and $(\ell_i, \ell_{n-1}, \ell_n)$ is a rooted triple of $\cT_1$. Otherwise, for some $i\in \{1, 2, \ldots, n-2\}$, we have $(p_i, v_i)\in E_1\cap E_2$. But then $(\ell_i, \ell_n, \ell_{n-1})$ is a rooted triple of $\cT_1$ and $(\ell_{n-1}, \ell_n, \ell_i)$ is a rooted triple of $\cT_2$, and so $\cT_1\not\cong \cT_2$, and neither $\cT_1$ nor $\cT_2$ is isomorphic to $\cC_n$ in this case.
Hence $|T(\cN_n)|=2^{n-1}-1$. This completes the proof of the theorem.
\end{proof}

We next establish a lemma to help us find the rooted phylogenetic tree displayed twice by a tree-child network $\cN$ with~$n$ leaves and~$n-1$ reticulations in which $|T(\cN)|=2^{n-1}-1$.

\begin{lemma}\label{thm:FindDuplicateTree}
	Let~$\cN$ be a tree-child network with~$n$ leaves and~$n-1$ reticulations, where~$n\ge 3$.
	Suppose that~$|T(\cN)| = 2^{n-1}-1$.
	Let~$(b,a)$ be a reticulated cherry of~$\cN$, and let~$g_b$ be the parent of~$p_b$ which is not~$p_a$, where $p_a$ and $p_b$ are the parents of $a$ and $b$, respectively. 
	Then the number of distinct rooted phylogenetic trees displayed by $(\cN\ba (g_b, p_b))\ba b$ is  $2^{n-2}-1$.
\end{lemma}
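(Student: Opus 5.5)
Write $\cN'=(\cN\ba (g_b,p_b))\ba b$. As in the proof of Theorem~\ref{main}, $\cN'$ is a tree-child network with $n-1$ leaves and $n-2$ reticulations. Theorem~\ref{main} then gives $|T(\cN')|\le 2^{n-2}-1$. It therefore suffices to prove the reverse inequality $|T(\cN')|\ge 2^{n-2}-1$. I would do this by comparing embeddings of $\cN'$ with those embeddings of $\cN$ that use the arc $(p_a,p_b)$, and then arguing by contraposition: if $\cN'$ displayed at most $2^{n-2}-2$ trees, then $\cN$ would display fewer than $2^{n-1}-1$ trees.

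First, split the embeddings of $\cN$ (described by Lemma~\ref{embeddings}) into two classes of size $2^{n-2}$ each: class $A$ contains $(p_a,p_b)$, and class $B$ contains $(g_b,p_b)$. Restricting an embedding in class $A$ to $X-\{b\}$ is exactly the bijection $E'\mapsto E$ used in the proof of Theorem~\ref{main}. Its realisation is obtained from that of $E'$ by attaching $b$ as a sibling of $a$. Hence the trees realised by class $A$ are in bijection with $T(\cN')$, so class $A$ contributes exactly $|T(\cN')|$ distinct trees, all containing the cherry $\{a,b\}$.

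Class $B$ has $2^{n-2}$ embeddings and so realises at most $2^{n-2}$ distinct trees. This gives
$$|T(\cN)|\le |T(\cN')|+2^{n-2}.$$
If $|T(\cN')|\le 2^{n-2}-2$, then $|T(\cN)|\le 2^{n-1}-2$, which contradicts the hypothesis. Hence $|T(\cN')|\ge 2^{n-2}-1$, and equality follows.

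This is straightforward bookkeeping. The only point needing care is the claim that class $A$ realises exactly $|T(\cN')|$ distinct trees, with no extra identifications or splittings. Two embeddings in class $A$ realise isomorphic trees if and only if their restrictions realise isomorphic trees in $\cN'$, because $b$ is always attached as a sibling of $a$: deleting $b$ from a tree and suppressing recovers $\cT'$, and conversely. I would verify this correspondence explicitly. I would also check the edge case where $p_a$ is the root, or where $g_b$ becomes suppressed, so that $\cN'$ is a genuine network with at least two leaves. This holds since $n\ge 3$.
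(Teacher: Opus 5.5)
Your proof is correct and uses the same ingredients as the paper's: the correspondence between embeddings of $\cN$ containing $(p_a,p_b)$ and embeddings of $\cN'$ (with $b$ re-attached as a sibling of $a$), together with Theorem~\ref{main} applied to $\cN'$. The paper packages this as a case analysis showing that both embeddings of the unique twice-displayed tree must contain $(p_a,p_b)$, whereas your union bound $|T(\cN)|\le |T(\cN')|+2^{n-2}$ reaches the same conclusion directly.
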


\begin{proof}
	Let~$\cN'=(\cN\ba (g_b, p_b))\ba b$. Let $\cT$ be the rooted phylogenetic tree displayed twice in $\cN$, and let $E_1$ and $E_2$ be the two embeddings of $\cT$ in $\cN$. We first show that $(p_a, p_b)\in E_1\cap E_2$. If $(g_b, p_b)\in E_1\cap E_2$, then, as $|T(\cN)|=2^{n-1}-1$ and $\cN$ displays exactly one rooted phylogenetic tree twice,  $\cN\ba (g_b, p_b)$ displays $2^{n-2}$ distinct rooted phylogenetic trees. As $\{a, b\}$ is a cherry of $\cN\ba (g_b, p_b)$, it follows that $\cN'$ displays $2^{n-2}$ distinct rooted phylogenetic trees, contradicting Theorem~\ref{main}. Also, if $(p_a, p_b)\in E_1$ and $(g_b, p_b)\in E_2$, then $\cN\ba (g_b, p_b)$, and therefore $\cN'$, displays $2^{n-2}$ distinct rooted phylogenetic trees, which gives another contradiction. Thus $(p_a, p_b)\in E_1\cap E_2$, and so $\cN'$ displays $2^{n-2}-1$ distinct rooted phylogenetic trees.
	This completes the proof of the lemma.
\end{proof}

Let~$\cN$ be a tree-child network on~$n$ leaves and~$n-1$ reticulations, where~$n\ge3$.
To find the rooted phylogenetic tree displayed twice by~$\cN$, we repeatedly find a reticulated cherry, $(b, a)$ say, and delete the reticulation arc directed into the parent $p_b$ of $b$ that is not $(p_a, p_b)$, where $p_a$ is the parent of $a$, and then delete one of the leaves of the resulting cherry. Repeating this process, we eventually obtain a tree-child network with two leaves and one reticulation (which is necessary part of an underlying $3$-cycle). 
We now return to $\cN$ and delete every reticulation arc that was deleted in this process, thereby obtaining a tree-child network $\cM$ with $n$ leaves and one reticulation. In the latter process, we obtain a sequence of tree-child networks on~$n$ leaves and~$k$ reticulations for~$k=n-1,n-2,\ldots, 1$, where each such network displays exactly~$2^{k}-1$ trees (by invoking Lemma \ref{thm:FindDuplicateTree} each time).
The tree-child network $\cM$ displays exactly one rooted phylogenetic tree, and so its one reticulation must necessarily be part of an underlying $3$-cycle. In summary, we have the following proposition.

\begin{proposition}\label{cor:DuplicateTree}
	Up to isomorphism, the unique rooted phylogenetic tree displayed by $\cM$ is the rooted phylogenetic tree displayed twice by $\cN$.
\end{proposition}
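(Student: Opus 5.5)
The plan is to track the twice-displayed tree through the sequence of networks $\cN=\cN^{(n-1)},\cN^{(n-2)},\ldots,\cN^{(1)}=\cM$ on $n$ leaves, where $\cN^{(k-1)}$ is obtained from $\cN^{(k)}$ by deleting one reticulation arc $(g_b,p_b)$. These are exactly the arcs removed in the iterative procedure. At each stage $\cN^{(k)}$ is tree-child with $k$ reticulations (Lemma~\ref{deletion}) and displays exactly $2^k-1$ trees. The key claim is that if $\cT$ is the tree displayed twice by $\cN^{(k)}$, then both of its embeddings avoid the deleted arc $(g_b,p_b)$, and hence $\cT$ is also the tree displayed twice by $\cN^{(k-1)}$. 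Iterating down to $k=1$ gives that the unique tree displayed by $\cM$ is $\cT$.

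To prove the key claim, I would copy the argument of Lemma~\ref{thm:FindDuplicateTree}, lifted from the leaf-deleted network to the full leaf set. Since $\cN^{(k)}$ has $2^k$ embeddings (Lemma~\ref{embeddings}) and only $2^k-1$ realised trees, exactly one tree $\cT$ has exactly two embeddings $E_1,E_2$, and every other tree has exactly one. The embeddings of $\cN^{(k-1)}$ correspond bijectively to the embeddings of $\cN^{(k)}$ that use $(p_a,p_b)$, and this correspondence preserves the realised tree. So if some $E_i$ used $(g_b,p_b)$, then $\cN^{(k-1)}$ would display $2^{k-1}$ distinct trees.

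This must be ruled out, and here care is needed. For intermediate stages, $\cN^{(k-1)}$ has $n$ leaves and $k-1<n-1$ reticulations, so the trivial bound $2^{k-1}$ gives no contradiction. Instead I would use the structure that led to $\cN^{(k-1)}$ in the procedure. At stage $k$ the procedure works in a network $\cN'_k$ on $k+1$ leaves with $k$ reticulations, obtained from $\cN^{(k)}$ by deleting the cherry leaves removed so far. The reticulated cherry $(b,a)$ is found in $\cN'_k$, and deleting leaves commutes with deleting arcs. The trees displayed by $\cN^{(k)}$ restrict injectively to those of $\cN'_k$: each earlier deleted leaf sits in a cherry with a retained leaf, so the restriction can be inverted by re-attaching these leaves, and the embeddings correspond bijectively. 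Hence $\cN'_k$ also displays exactly $2^k-1$ trees, and Lemma~\ref{thm:FindDuplicateTree} applies to it directly, since it has $k+1$ leaves and $k$ reticulations. That lemma shows both embeddings of its duplicated tree use $(p_a,p_b)$, and transferring back through the bijection gives the same conclusion in $\cN^{(k)}$.

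I expect the main obstacle to be this bookkeeping: verifying that leaf deletions commute with the arc deletions, and that the correspondence between embeddings of $\cN^{(k)}$ and of $\cN'_k$ preserves distinctness of realised trees. Both rely on the removed leaves always being cherry partners once the relevant arcs are gone. Finally, at $k=1$ the network $\cM$ has two embeddings and $2^1-1=1$ displayed tree, so its unique displayed tree is $\cT$, as claimed.
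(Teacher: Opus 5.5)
Your proposal is correct and is essentially the paper's argument. The paper likewise passes through the $n$-leaf networks with $k=n-1,\ldots,1$ reticulations, each displaying exactly $2^k-1$ trees by invoking Lemma~\ref{thm:FindDuplicateTree} at every step, and you add the bookkeeping it leaves implicit: you apply the lemma to the leaf-deleted network on $k+1$ leaves, relating it to the $n$-leaf one by successive cherry-leaf deletions (each removed leaf need only be in a cherry at the moment it is deleted, not with a leaf that survives), and you use that lemma's proof, which gives $(p_a,p_b)\in E_1\cap E_2$, to carry the duplicated tree down to $\cM$.
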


To illustrate, consider the network~$\cN_5$ in Figure~\ref{spinal}. We remove the edges~$(p_i,v_i)$ for all $i\in\{1,2,3\}$ to obtain the network~$\cM_5$.
Note that~$\cM_5$ displays~$\cC_5$, and thus by Proposition~\ref{cor:DuplicateTree},~$\cN_5$ displays~$\cC_5$ twice.
One can also see this directly, by considering the two embeddings of~$\cC_5$ into~$\cN_5$, which contains the reticulation arcs in the sets $\{(q_1,v_1),(q_2,v_2),(q_3,v_3),(p_4,v_4)\}$ and~$\{(q_1,v_1),(q_2,v_2),(q_3,v_3),(q_4,v_4)\}$, respectively.


\begin{thebibliography}{88}
\bibitem{bor16} M.\ Bordewich, C.\ Semple, Determining phylogenetic networks from inter-taxa distances, Journal of Mathematical Biology 73 (2016) 283--303.

\bibitem{car09} G.\ Cardona, F.\ Rossell\'{o}, G.\ Valiente, Comparison of tree-child phylogenetic networks, IEEE/ACM Transactions on Computational Biology and Bioinformatics 6 (2009) 552--569.

\bibitem{doe21} J.\ D\"{o}cker, S.\ Linz, C.\ Semple, Display sets of normal and tree-child networks, Electronic Journal of Combinatorics 28 (2021) \#P1.8.

\bibitem{doo99} W.F.\ Doolittle, Phylogenetic classification and the universal tree, Science 284 (1999) 2124--2129.

\bibitem{fra25} A.\ Francis, ``Normal'' phylogenetic networks may be emerging as the leading class, Journal of Theoretical Biology 614 (2025) 112236.

\bibitem{lin13} S.\ Linz, K. St John, C.\ Semple, Counting trees in a phylogenetic network is \#P-complete, SIAM Journal on Computing 42 (2013) 1768--1776.

\bibitem{sem16} C.\ Semple, Phylogenetic networks with every embedded phylogenetic tree a base tree, Bulletin of Mathematical Biology 78 (2016) 132--137.

\bibitem{sem26} C.\ Semple, K.\ Wicke, A sharp lower bound for the number of phylogenetic trees displayed by a tree-child network, Annals of Combinatorics, in press.

\bibitem{wil08} S.\ Willson, Reconstruction of certain phylogenetic networks from the genomes at their leaves, Journal of Theoretical Biology 252 (2008) 338--349.

\bibitem{wil10} S.\ Willson, Properties of normal phylogenetic networks, Bulletin of Mathematical Biology 72 (2010) 340--358.
\end{thebibliography}
\end{document}